\documentclass[10pt, a4paper]{amsart}
\usepackage{amssymb, amsmath, amsfonts, amsthm, verbatim}
\usepackage{hyperref}
\pagestyle{plain} \frenchspacing
\newtheorem{theorem}{Theorem}[section]
\newtheorem{lemma}[theorem]{Lemma}
\newtheorem{definition}[theorem]{Definition}
\newtheorem{cor}[theorem]{Corollary}
\newtheorem{problem}[theorem]{Problem}
\newtheorem{question}[theorem]{Question}

\newtheorem{example}[theorem]{Example}

\newtheorem{prop}[theorem]{Proposition}

\title{Purely (Non-)Strongly Real Beauville Groups}


\author{Ben Fairbairn}
\address{Ben Fairbairn, Department of Economics, Mathematics and Statistics, Birkbeck, University of London, Malet Street, London WC1E 7HX, United Kingdom}
\email{b.fairbairn@bbk.ac.uk}


\begin{document}
\maketitle

\begin{abstract}
We discuss Beauville groups whose corresponding Beauville surfaces are either always strongly real or never strongly real producing several infinite families of examples.
\end{abstract}

\section{Introduction}


\begin{definition}
A surface $\mathcal{S}$ is a \textbf{Beauville surface} if
\begin{itemize}
\item the surface $\mathcal{S}$ is isogenous to a higher product, that is, $\mathcal{S}\cong(\mathcal{C}_1\times\mathcal{C}_2)/G$ where
$\mathcal{C}_1$ and $\mathcal{C}_2$ are algebraic curves of genus at least 2 and $G$ is a finite group acting
faithfully on $\mathcal{C}_1$ and $\mathcal{C}_2$ by holomorphic transformations in such a way that it
acts freely on the product $\mathcal{C}_1\times\mathcal{C}_2$, and
\item each $\mathcal{C}_i/G$ is isomorphic to the projective line $\mathbb{P}_1(\mathbb{C})$ and the covering map
$\mathcal{C}_i\rightarrow\mathcal{C}_i/G$ is ramified over three points.
\end{itemize}
\end{definition}

These surfaces were first defined by Catanese in \cite{C} and the first significant investigation of them was conducted by Bauer, Catanese and Grunewald in \cite{BCG}. They have numerous nice properties and are relatively easy to construct making them useful for producing counterexamples and testing conjectures. The following condition is also investigated in \cite{BCG}.

\begin{definition} Let $\mathcal{S}$ be a complex surface. We say that $\mathcal{S}$ is \textbf{strongly real} if there exists
a biholomorphism $\sigma\colon\mathcal{S}\rightarrow\overline{\mathcal{S}}$ such that $\sigma\circ\overline{\sigma}$ is the identity map.
\end{definition}

What makes these surfaces particularly easy to work with is that all of the above can be easily translated into group theoretic terms.


\begin{definition}\label{MainDef} Let $G$ be a finite group. Let $x,y\in G$ and let
\[
\Sigma(x, y) :=\bigcup_{i=1}^{|G|}\bigcup_{g\in G}\{(x^i)^g,(y^i)^g,((xy)^i)^g\}.
\]

A \textbf{Beauville structure} for the group $G$ is a set of pairs of elements $\{(x_1, y_1),(x_2,y_2)\}\subset G\times G$
with the property that $\langle x_1, y_1\rangle = \langle x_2, y_2\rangle=G$ such that
\[
\Sigma(x_1, y_1)\cap \Sigma(x_2, y_2)=\{e\}.
\]
If $G$ has a Beauville structure we say that $G$ is a \textbf{Beauville group}.
\end{definition}

A group defines a Beauville surface if and only if it has a Beauville structure. Furthermore, the Beauville surface defined by a particular Beauville structure is strongly real if and only if the corresponding Beauville structure has the property of being strongly real that we define as follows.

\begin{definition}\label{SRDef}
 Let $G$ be a Beauville group and let $X =\{(x_1, y_1),(x_2, y_2)\}$ be a
Beauville structure for $G$. We say that $G$ and $X$ are \textbf{strongly real} if there exists an
automorphism $\phi\in\mbox{Aut}(G)$ and elements $g_i\in G$ for $i = 1, 2$
such that
\[
g_i\phi(x_i)g_i^{-1}=x_i^{-1}\mbox{ and }g_i\phi(y_i)g_i^{-1}=y_i^{-1}.
\]
\end{definition}

Most of the Beauville structures appearing in the literature are either explicitly shown to be strongly real or the question of reality is never pursued. In many groups there are elements that can never be inverted by automorphisms meaning any Beauville structure defined using such elements cannot be strongly real, indeed groups typically have numerous Beauville structures some of which are strongly real, some of which are not. Here we are interested in the extreme cases and thus make the following definition.

\begin{definition}\label{PureDef}
A finite group $G$ is a \textbf{purely strongly real Beauville group} if $G$ is a Beauville group such that every Beauville structure of $G$ is strongly real. A finite group $G$ is a \textbf{purely non-strongly real Beauville group} if $G$ is a Beauville group such that none of its Beauville structures are strongly real.
\end{definition}

Throughout we shall follow the conventions that in a group $G$ and elements $g,h\in G$ we have that $g^h=hgh^{-1}$ and $[g,h]=ghg^{-1}h^{-1}$.

This paper is organised as follows. In Section 2 we will give infinitely many examples of Beauville groups that have strongly real Beauville structures as well as Beauville structures that are not suggesting that groups typically lie in neither of the categories in Definition \ref{PureDef}. Despite this, we go on in Section 3 to give infinitely many examples of purely strongly real Beauville groups before in the final section giving infinitely many examples of purely non-strongly real Beauville groups.

\section{Neither Case}

We first prove results showing that Beauville groups typically fall into neither case with several different examples.

\begin{lemma}
If $n>5$, then the alternating group A$_n$ is neither a Purely strongly real Beauville group nor a non-strongly real Beauville group.
\end{lemma}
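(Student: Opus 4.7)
The plan is to prove the two halves separately: that $A_n$ admits at least one strongly real Beauville structure (so it is not purely non-strongly real), and that it also admits at least one Beauville structure that is not strongly real (so it is not purely strongly real). For the first half, I would simply appeal to existing results in the literature: explicit strongly real Beauville structures on $A_n$ for $n \geq 6$ have been constructed (for instance by Fuertes and Gonz\'alez-Diez), and this can be quoted.

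For the second half, the strategy is a parity obstruction. Since $\mathrm{Aut}(A_n) = S_n$ for $n \neq 6$, every automorphism $\phi$ appearing in Definition \ref{SRDef} is conjugation by some $h \in S_n$; the condition then becomes that there exist $g_1, g_2 \in A_n$ with $g_i h$ inverting both $x_i$ and $y_i$. Writing $I(z) = \{s \in S_n : szs^{-1} = z^{-1}\}$ (a coset of $C_{S_n}(z)$ in $S_n$), this demands that each of $I(x_1), I(y_1), I(x_2), I(y_2)$ meet the coset $A_n h$, i.e.\ contain an element of the same parity as $h$. If I can construct a Beauville structure in which $I(x_1)$ is contained entirely in the odd coset of $A_n$ while $I(y_1)$ lies entirely in the even coset, then no choice of $h$ works and the structure cannot be strongly real.

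Such elements arise from cycle types consisting of distinct odd parts: for any such $z$ one has $C_{S_n}(z) \leq A_n$, so $I(z)$ is a single $A_n$-coset of pure parity, and a short computation shows that a single $k$-cycle (with $k$ odd) has an inverter of parity $(k-1)/2 \pmod 2$. Thus parts $\equiv 3 \pmod 4$ contribute an odd inverter while parts $\equiv 1 \pmod 4$ contribute an even one, and these may be distributed between $x_1$ and $y_1$ to force inverters of opposite parity.

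The main obstacle is to verify that such $(x_1, y_1)$ and $(x_2, y_2)$ actually form a Beauville structure: one needs $\langle x_i, y_i \rangle = A_n$ for each $i$, together with the $\Sigma$-disjointness condition of Definition \ref{MainDef}. Generation will follow from standard criteria (primitivity plus a prime-order cycle of appropriate length forces containment of $A_n$), while disjointness is arranged by choosing the element orders in the two triples to be pairwise coprime. The case $n = 6$, in which $\mathrm{Aut}(A_6)$ properly contains $S_6$, needs a small separate verification to account for the exceptional outer automorphism.
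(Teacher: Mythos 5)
Your overall decomposition matches the paper's: quote Fuertes--Gonz\'alez-Diez for the existence of strongly real structures, then exhibit a single Beauville structure that cannot be strongly real. For the second half the paper is more pedestrian than you are: it writes down explicit pairs (a $3$-cycle with an $n$-cycle for odd $n$, a double transposition with an $(n-1)$-cycle for even $n$, plus a second disjoint pair) and uses the fact that the inverters of the long cycle form a single coset of its odd-order centralizer --- the $n$ or $n-1$ dihedral ``reflections'' --- none of which inverts the chosen short element. Your parity obstruction is a cleaner mechanism where it applies, but it does not apply for every $n>5$, and that is a genuine gap.

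Concretely: $I(z)$ has pure parity precisely when $C_{S_n}(z)\leq A_n$, i.e.\ when the cycle type of $z$ is a partition of $n$ into distinct odd parts, and the common parity of $I(z)$ is then $\sum_i(k_i-1)/2=(n-m)/2$, where $m$ is the number of parts. So the parity depends only on $m$, and to get $x_1$ with purely odd inverters and $y_1$ with purely even ones you need two partitions of $n$ into distinct odd parts whose part-counts differ by $2\pmod 4$. Since the least sum of $m$ distinct odd parts is $m^2$ (and $m\equiv n\pmod 2$), this is possible only for odd $n\geq 9$ and even $n\geq 16$. For $n=7$ the only admissible cycle type is the $7$-cycle; for $n=6,8,10,12,14$ every partition into distinct odd parts has exactly two parts, so every admissible element has inverter parity $(n-2)/2$ and no opposite-parity pair exists. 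For those $n$ your construction cannot be carried out and you must fall back on a direct argument of the paper's kind, checking that the finitely many inverters of a long cycle miss the inverters of a short companion element. The remaining work you flag --- generation via primitivity plus a short cycle or commutator, disjointness via coprime orders, and the separate treatment of $n=6$ because of the exceptional outer automorphism --- is exactly what the paper does, so once the exceptional values of $n$ are patched your argument would give a correct, if differently organised, proof.
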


\begin{proof}
Strongly real Beauville structures for these groups were constructed by Fuertes and Gonz\'{a}lez-Diez in \cite{FG} so it is sufficient to construct non-strongly real Beauville structures for these groups. We consider the cases $n$ odd and $n$ even separately. Recall that in either case, if two permutations have different cycle type, then they cannot be conjugate.

First suppose that $n\geq7$ is odd. Let
\[
x_1:=(1,2,4)\mbox{ and }y_1:=(1,2,3,4,\ldots,n).
\]
We have that their product is the $n$-cycle $x_1y_1=(1,3,4,2,5,\ldots,n)$ by direct calculation. By considering the subgroup generated by the elements $x_1^{y_1^i}$ we have that the subgroup these elements generate is 2-transitive and therefore primitive. Since this subgroup contains the 3-cycle $x_1$ it follows that these elements generate the whole of the alternating group. It is easy to see that no automorphism simultaneously inverts both of these elements so any permutations that can be used to extend this to a Beauville structure gives a non-strongly real Beauville structure. Consider the permutations
\[
x_2:=(5,4,3,2,1)\mbox{ and }y_2:=(3,4,5,6,\ldots,n).
\]
We have that their product is the $(n-2)$-cycle $x_2y_2=(1,6,\ldots,n,3,2)$ by direct calculation. By considering the subgroup generated by the elements $x_2^{y_2^i}$ we have that the subgroup these elements generate is 2-transitive and therefore primitive. Since this subgroup contains the double-transposition $[x_2,y_2]=(1,5)(3,n)$ it follows that these elements generate the whole of the alternating group. We therefore have a non-strongly real Beauville group.

Next, suppose $n\geq8$ is even. Let
\[
x_1:=(1,2)(3,4)\mbox{ and }y_1:=(2,3,4,\ldots,n).
\]
We have that their product is the $(n-1)$-cycle $x_1y_1=(1,3,5,\ldots,n,2)$ by direct calculation. By considering the subgroup generated by the elements $x_1^{y_1^i}$ we have that the subgroup these elements generate is 2-transitive and therefore primitive. Since this subgroup contains the double-transposition $x_1$ it follows that these elements generate the whole of the alternating group. It is easy to see that no automorphism simultaneously inverts both of these elements so any permutations that can be used to extend this to a Beauville structure gives a non-strongly real Beauville structure.

Finally, let
\[
x_2:=(1,2,3)(4,5,\ldots,n)\mbox{ and }y_2:=(5,4,3,2,1).
\]
We have that their product is the $(n-3)$-cycle $x_2y_2=(3,5,\ldots,n)$. By considering the subgroup generated by the elements $y_2^{x_2^i}$ we have that the subgroup these elements generate is 2-transitive and therefore primitive. Since this subgroup contains the double transposition $[x_2,y_2^2]=(2,5)(3,n)$ it follows that these elements generate the whole of the alternating group. We therefore have a non-strongly real Beauville group.

The case of $A_6$ is complicated by the existence of exceptional outer automorphisms but despite this can easily be handled separately.
\end{proof}

Further examples are given by the following.

\begin{lemma}
None of the Suzuki groups $^2B_2(2^{2n+1})$ are purely (non-)strongly real Beauville groups.
\end{lemma}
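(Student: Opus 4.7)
The plan is to establish the lemma by the same two-pronged strategy used for the alternating groups in the previous proof: exhibit one strongly real Beauville structure and one non-strongly real Beauville structure for each $\mathrm{Sz}(q)$, where $q=2^{2n+1}$ with $n\geq 1$. For the strongly real side I would simply appeal to the existing literature (Fuertes--Jones and Fairbairn--Magaard--Parker, where generating pairs for $\mathrm{Sz}(q)$ whose elements are inverted by suitable outer automorphisms and/or inner elements are already exhibited), so the substantial work lies entirely on the non-strongly real side.

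The key arithmetic input is the conjugacy structure of $\mathrm{Sz}(q)$. Its nontrivial elements fall into four Sylow classes: order $2$ or $4$ (Sylow $2$-subgroup of exponent $4$) and orders dividing $q-1$, $q+\sqrt{2q}+1$, and $q-\sqrt{2q}+1$, the latter three coming from self-centralising cyclic tori whose normaliser quotients are $C_2$, $C_4$, and $C_4$ respectively. A direct check shows that the $C_4$-action on a torus of order $m=q\pm\sqrt{2q}+1$ is by the field Frobenius (the map $t\mapsto t^{q}$) rather than by inversion, so an element of order $m$ in $\mathrm{Sz}(q)$ is never $\mathrm{Sz}(q)$-conjugate to its inverse. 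Moreover $\mathrm{Out}(\mathrm{Sz}(q))$ is the cyclic group of order $2n+1$ generated by the field automorphism $\Phi\colon t\mapsto t^{2}$; since the powers $2^{k}\pmod{m}$ arising from iterating $\Phi$ never hit $-1$ (for $-1$ to arise one would need $m\mid 2^{k}+1$, which contradicts the standard factorisation $q^{2}+1=(q+\sqrt{2q}+1)(q-\sqrt{2q}+1)$ for $k<2(2n+1)$), it follows that such an element cannot be inverted by any element of $\mathrm{Aut}(\mathrm{Sz}(q))$ at all.

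With this in hand I would construct a non-strongly real Beauville structure by choosing $x_{1}$ of order $q+\sqrt{2q}+1$ and $y_{1}$ of order $q-\sqrt{2q}+1$ with $x_{1}y_{1}$ of order $q-1$, and $x_{2}$, $y_{2}$ of orders $q-1$ and $4$ respectively with $x_{2}y_{2}$ of order $q+\sqrt{2q}+1$ or $q-\sqrt{2q}+1$. Generation follows from inspection of the list of maximal subgroups of $\mathrm{Sz}(q)$ (due to Suzuki): no maximal subgroup simultaneously contains two cyclic tori from different ``types'', so $\langle x_{i},y_{i}\rangle=\mathrm{Sz}(q)$ in each case. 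The intersection condition $\Sigma(x_{1},y_{1})\cap\Sigma(x_{2},y_{2})=\{e\}$ reduces to checking that $\{|x_{1}|,|y_{1}|,|x_{1}y_{1}|\}$ and $\{|x_{2}|,|y_{2}|,|x_{2}y_{2}|\}$ intersect only in coprime orders and that the corresponding cyclic subgroups lie in different $\mathrm{Sz}(q)$-conjugacy classes; the pairwise coprimality of $q-1$, $4$, $q+\sqrt{2q}+1$ and $q-\sqrt{2q}+1$ (each is odd except $4$, and they are coprime by a short gcd computation) makes this automatic. Finally, because $x_{1}$ has order $q+\sqrt{2q}+1$ and is not inverted by any automorphism, Definition \ref{SRDef} cannot be satisfied and the structure is non-strongly real.

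The main obstacle will be the existence part, namely producing explicit triples $(x_{i},y_{i},x_{i}y_{i})$ of the prescribed orders that actually generate $\mathrm{Sz}(q)$. A conceptually clean way around explicit matrix constructions is to count, via standard structure-constant (character-table) arguments for $\mathrm{Sz}(q)$, pairs of elements of the desired orders whose product also has a prescribed order, and then subtract the contributions from the (known and small) list of proper subgroups to show that some such pair generates. This is essentially the Liebeck--Shalev style argument already used elsewhere in the Beauville literature and adapts to $\mathrm{Sz}(q)$ with only routine bookkeeping, the character table of $\mathrm{Sz}(q)$ being completely explicit.
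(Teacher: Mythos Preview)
Your argument for the non-strongly real side contains a genuine error. You assert that an element of order $m=q\pm\sqrt{2q}+1$ is never $\mathrm{Sz}(q)$-conjugate to its inverse because the generator of the Weyl group $C_4$ acts on the torus as $t\mapsto t^{q}$ rather than as inversion. But you have overlooked the square of that generator: since $m\mid q^{2}+1$ we have $q^{2}\equiv-1\pmod{m}$, so the involution in $N_G(T)/T\cong C_4$ acts as $t\mapsto t^{q^{2}}=t^{-1}$. Hence every element of these tori \emph{is} real in $\mathrm{Sz}(q)$; one checks this directly for $\mathrm{Sz}(8)$, where the three classes of elements of order $13$ are each closed under inversion. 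Consequently your proposed $x_1$ can be inverted by an inner automorphism, and the structure you build around it is not forced to be non-strongly real.

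The non-real elements you need are exactly the ones the paper uses: the elements of order $4$. In $\mathrm{Sz}(q)$ there are two classes of such elements, mutually inverse, and since $\mathrm{Out}(\mathrm{Sz}(q))\cong C_{2n+1}$ has odd order it cannot interchange them; thus no automorphism inverts an element of order $4$. The paper's proof simply observes that the Beauville structures of Fuertes--Jones already use an element of order $4$ and are therefore non-strongly real, and then cites \cite{F2} for the strongly real structures. (Note also that your citations are reversed: Fuertes--Jones supplies the non-strongly real side, not the strongly real one.) If you wish to salvage your approach, replace the torus element $x_1$ by an element of order $4$ and the rest of your outline goes through; but as written the key non-reality claim is false.
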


\begin{proof}
The Beauville structures constructed by Fuertes and Jones in \cite[Theorem 6.2]{FJ} are non-strongly real since they take $y_1$ as having order 4 and no automorphism maps such elements to their inverses in these groups. Strongly real Beauville structures for these groups were constructed by the author in \cite{F2}.
\end{proof}

\begin{lemma}
Aside from the Mathieu groups M$_{11}$ and M$_{23}$ none of the sporadic simple groups are purely (non-)strongly real Beauville groups.
\end{lemma}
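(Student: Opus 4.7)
The plan is, for each sporadic simple group $G$ other than M$_{11}$ and M$_{23}$, to exhibit \emph{both} a strongly real Beauville structure and a non-strongly real Beauville structure; both are required to place $G$ outside the two categories of Definition \ref{PureDef}.

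For the strongly real half I would appeal directly to existing work in the literature: combining the constructions of Fuertes--Jones, Fuertes--Gonz\'alez-Diez and the author's own earlier papers furnishes explicit strongly real Beauville structures for every sporadic simple group except M$_{11}$ and M$_{23}$, so no new argument is needed here and the work reduces to the non-strongly real half.

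The mechanism for producing a non-strongly real Beauville structure is as follows. If $(x_1,y_1)$ generates $G$ and one of the three elements $x_1$, $y_1$, $x_1 y_1$ lies in a conjugacy class $C$ such that no element of $\mbox{Aut}(G)$ maps $C$ to the class of its inverse, then no $\phi \in \mbox{Aut}(G)$ can simultaneously satisfy both conditions of Definition \ref{SRDef}, so $(x_1,y_1)$ can only be completed, if at all, to a non-strongly real Beauville structure. For the many sporadic groups with trivial outer automorphism group --- for instance M$_{24}$, $J_1$, $J_4$, Co$_1$, Co$_3$, Ru, Ly, Th, $B$ and $M$ --- this reduces to identifying any non-real conjugacy class, which is immediate from the ATLAS character tables, and then selecting a generating pair that uses such an element. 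For the sporadic groups $G$ with outer automorphism group of order $2$ whose non-real classes are interchanged with their inverses by the outer automorphism, one works slightly harder: one picks $(x_1,y_1)$ so that the unique candidate automorphism $\phi$ that could invert $x_1$ necessarily fails to invert $y_1$, typically by pairing an element from an $\mbox{Out}(G)$-fixed class with one from an $\mbox{Out}(G)$-swapped class. A second pair $(x_2,y_2)$ is then chosen with element orders coprime to those appearing in the first pair, which automatically delivers the intersection condition of Definition \ref{MainDef}.

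The main obstacle is the unavoidable case-by-case verification that the chosen pair truly generates $G$ and can be completed to a bona fide Beauville structure; in practice this is handled computationally using GAP or \textsc{Magma} via the standard generators and conjugacy class data recorded in the on-line ATLAS of Finite Group Representations, with only the Monster requiring any special care. Finally, one notes that in M$_{11}$ and M$_{23}$ the combination of trivial outer automorphism group and the small number of available generating triples prevents this strategy from working, which is precisely why those two groups are singled out.
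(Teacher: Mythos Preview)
Your proposal is correct and follows essentially the same approach as the paper's own proof: cite existing literature for the strongly real structures, and for the non-strongly real structures use generating pairs involving elements whose conjugacy class is not sent to its inverse by any automorphism, with the remaining verification done computationally and via character theory. One minor bibliographic correction: the strongly real Beauville structures for the sporadic groups (other than M$_{11}$ and M$_{23}$) are attributed in the paper solely to \cite{F1}, not to Fuertes--Jones or Fuertes--Gonz\'alez-Diez, whose papers deal with the Suzuki groups and with $S_n$, $A_n$ respectively.
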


\begin{proof}
Aside from M$_{11}$ and M$_{23}$, strongly real Beauville structures for these groups were constructed by the author in \cite{F1}. Non-strongly real Beauville structures are easily obtained computationally and with character theory, the more difficult cases being easily dealt with thanks to the existence of elements that cannot be sent to their inverses by any automorphism at all such as elements of order 71 in the monster group $\mathbb{M}$ and elements of order 47 in the baby monster group $\mathbb{B}$.
\end{proof}

We will return to the cases of M$_{11}$ and M$_{23}$ in Section \ref{NonSec} since they are genuine exceptions to the above.

It is clear that numerous further examples can be constructed from the above using direct products.

\section{Purely Strongly Real Beauville Groups}

Firstly, the following observation has been made elsewhere in the literature many times and provides infinitely many purely strongly real Beauville groups.

\begin{lemma}\label{AbelianProof}
A finite abelian group is a strongly real Beauville group if and only if it is a purely strongly real Beaville group.
\end{lemma}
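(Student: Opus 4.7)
The plan is to observe that the equivalence is essentially trivial once one notices that in an abelian group every Beauville structure is automatically strongly real, witnessed by the same automorphism each time. So the main content is the forward direction: \emph{strongly real} Beauville group $\Rightarrow$ \emph{purely strongly real} Beauville group. The reverse direction is immediate, because \textbf{Beauville group} in Definition \ref{PureDef} already requires at least one Beauville structure to exist, and under the purely strongly real hypothesis that structure is strongly real.

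For the forward direction, I would not even need to use the hypothesis that $G$ carries some \emph{particular} strongly real structure. Instead I would argue that if $G$ is any abelian Beauville group, then every Beauville structure on $G$ is strongly real. The key observation is that the inversion map
\[
\phi\colon G\longrightarrow G,\qquad \phi(g):=g^{-1},
\]
is a homomorphism precisely because $G$ is abelian (one checks $\phi(gh)=(gh)^{-1}=h^{-1}g^{-1}=g^{-1}h^{-1}=\phi(g)\phi(h)$), and it is a bijection of order two, so $\phi\in\mathrm{Aut}(G)$.

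Now let $X=\{(x_1,y_1),(x_2,y_2)\}$ be any Beauville structure for $G$. Since $G$ is abelian, conjugation is trivial, so for $i=1,2$ and for $g_i:=e$ we have
\[
g_i\phi(x_i)g_i^{-1}=\phi(x_i)=x_i^{-1}\quad\text{and}\quad g_i\phi(y_i)g_i^{-1}=\phi(y_i)=y_i^{-1},
\]
which is exactly the condition of Definition \ref{SRDef}. Hence $X$ is strongly real. Since $X$ was arbitrary, $G$ is purely strongly real.

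There is no real obstacle here; the only thing to be careful about is the logical structure of the biconditional, namely that \textbf{purely strongly real Beauville group} tacitly demands that the group actually \emph{be} a Beauville group. I would note that in the finite abelian setting the Beauville groups themselves were classified by Catanese (giving the characterisation in terms of the two generating pairs and the intersection condition), and the argument above shows that the additional strongly real hypothesis adds no further restriction in this case.
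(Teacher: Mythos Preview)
Your proof is correct and takes essentially the same approach as the paper: the inversion map $g\mapsto g^{-1}$ is an automorphism of any abelian group and witnesses that every Beauville structure is strongly real. The paper states this in one line, while you have helpfully spelled out the biconditional and the verification of Definition~\ref{SRDef}.
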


\begin{proof}
In any abelian group the homomorphism $x\mapsto-x$ inverts every element.
\end{proof}

Abelian Beauville groups were first constructed by Catanese in \cite{C} and classified by Bauer, Catanese and Grunewald in \cite{BCG}. For non-soluble examples, we have the following infinite supply.

\begin{prop}\label{PruityL2(q)}
Let $q>2$ be a power of 2 and let $k$ be a positive integer. If $(q,k)\not=(4,1)$, then the characteristically simple group $L_2(q)^k$ is a purely strongly real Beaville group whenever it is 2-generated.
\end{prop}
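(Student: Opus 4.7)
The plan is to reduce the problem to the simple factor $L_2(q)$, and there to prove the stronger statement that any pair generating $L_2(q)$ can be simultaneously inverted by an \emph{inner} conjugation. Once that is in hand, the lift to $L_2(q)^k$ is essentially trivial: apply the simple-factor result to each coordinate projection and assemble the resulting inverting elements componentwise. Throughout we may take $\phi=\mathrm{id}$ in Definition \ref{SRDef}, so no outer automorphism is ever needed.

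For the simple factor I use the identifications $L_2(q)=\mathrm{PSL}_2(q)=\mathrm{SL}_2(q)=\mathrm{PGL}_2(q)$, all valid since $q$ is even. For any $w\in\mathrm{SL}_2(q)$ one has $\mathrm{tr}(w)=\mathrm{tr}(w^{-1})$, and cyclic invariance of the trace gives $\mathrm{tr}(x^{-1}y^{-1})=\mathrm{tr}((yx)^{-1})=\mathrm{tr}(yx)=\mathrm{tr}(xy)$, so the pairs $(x,y)$ and $(x^{-1},y^{-1})$ share the common trace triple $(\mathrm{tr}(x),\mathrm{tr}(y),\mathrm{tr}(xy))$. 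A generating pair of $L_2(q)$ cannot lie in any proper line-stabiliser (Borel subgroup), and so acts irreducibly on $\mathbb{F}_q^2$; the classical Fricke-type result then shows that two irreducible pairs in $\mathrm{SL}_2(q)$ sharing a trace triple are $\mathrm{GL}_2(q)$-conjugate. Since $|\mathbb{F}_q^*|=q-1$ is odd when $q$ is even, every determinant in $\mathrm{GL}_2(q)$ is a square, so $\mathrm{GL}_2(q)=Z\cdot\mathrm{SL}_2(q)$ where $Z$ denotes the centre; conjugation by central scalars is trivial, and the conjugating element can therefore be chosen in $\mathrm{SL}_2(q)=L_2(q)$.

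Now take any Beauville structure $\{(x_1,y_1),(x_2,y_2)\}$ for $L_2(q)^k$. Surjectivity of each coordinate projection $\pi_j\colon L_2(q)^k\to L_2(q)$ forces each pair $(\pi_j(x_i),\pi_j(y_i))$ to generate $L_2(q)$. By the previous paragraph, for each $i$ and $j$ choose $g_i^{(j)}\in L_2(q)$ simultaneously inverting this pair, and set $g_i:=(g_i^{(1)},\ldots,g_i^{(k)})\in L_2(q)^k$. With $\phi=\mathrm{id}$, the element $g_i$ conjugates $x_i$ to $x_i^{-1}$ and $y_i$ to $y_i^{-1}$, exactly as Definition \ref{SRDef} demands. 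Since the Beauville structure was arbitrary, the group is purely strongly real.

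The principal obstacle I anticipate is the rigorous invocation of the trace-triple classification in characteristic $2$: one must verify both the irreducibility of generating pairs and the descent from $\mathrm{GL}_2(q)$-conjugacy to $L_2(q)$-conjugacy, the latter being precisely where the hypothesis that $q$ is even becomes essential (for $q$ odd it would fail in general, because some determinants are non-squares and $\mathrm{PGL}_2(q)\neq\mathrm{PSL}_2(q)$). The excluded case $(q,k)=(4,1)$ merely reflects the classical fact that $L_2(4)=A_5$ is not itself a Beauville group, so the statement would be vacuous there.
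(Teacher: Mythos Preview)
Your argument is correct and follows the same core strategy as the paper: both rest on the fact that every generating pair of $L_2(q)$ with $q$ even can be simultaneously inverted by an \emph{inner} automorphism, so that one may take $\phi=\mathrm{id}$ in Definition~\ref{SRDef}. The paper simply cites MacBeath for this fact; you instead supply the underlying trace-triple argument yourself, observing that $(x,y)$ and $(x^{-1},y^{-1})$ share the triple $(\mathrm{tr}\,x,\mathrm{tr}\,y,\mathrm{tr}\,xy)$, that irreducible pairs with equal trace triples are $\mathrm{GL}_2(q)$-conjugate, and that the oddness of $q-1$ lets one descend to $\mathrm{SL}_2(q)=L_2(q)$. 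You are also more explicit than the paper about the componentwise lift to $L_2(q)^k$. In effect your proof is a self-contained unpacking of the paper's one-line citation; the only point to tighten is that the Fricke-type conjugacy statement over $\mathbb{F}_q$ (rather than its algebraic closure) really uses \emph{absolute} irreducibility, which does hold here since a generating pair spans all of $\mathrm{SL}_2(q)$ and the natural $2$-dimensional module for $\mathrm{SL}_2(q)$ is absolutely irreducible.
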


\begin{proof}
In \cite{MacB} MacBeath observed that any generating pair for the groups $L_2(q)$ can be inverted by an inner automorphism when $q$ is even and since these groups have only one class of involutions, the only elements of even order.
\end{proof}

We remark that the exception $(q,k)=(4,1)$ is a genuine exception thanks to the isomorphism A$_5\cong\mbox{L}_2(4)$ and this group is well known to not be a Beauville group.

For each prime $p\geq5$ we can also construct infinitely many (new) non-abelian nilpotent examples as follows.

\begin{prop}\label{ExtraSpecProp}
If $p\geq5$ is a prime and $n\geq r\geq1$ are integers, then the group
\[
G:=\langle x,y,z\,|\,x^{p^n},\,y^{p^n},z^{p^r}\,[x,y]=z,\,[x,z],\,[y,z]\rangle
\]
is a purely strongly real Beauville group.
\end{prop}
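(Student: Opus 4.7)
The plan has two independent ingredients: to produce a single automorphism that inverts every generating pair of $G$ up to an inner automorphism, and to exhibit at least one Beauville structure for $G$.

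For the automorphism, define $\phi$ on generators by $\phi(x):=x^{-1}$ and $\phi(y):=y^{-1}$. Since $G$ has nilpotency class $2$, the identity $[x^{-1},y^{-1}]=[x,y]$ holds and forces $\phi(z)=z$, so every relator of the given presentation is preserved and $\phi$ extends to an involution of $G$. Given any generating pair $(u_1,u_2)$ of $G$, one seeks $g\in G$ with $g\phi(u_i)g^{-1}=u_i^{-1}$ for $i=1,2$. Writing $u_i=x^{a_i}y^{b_i}z^{c_i}$ in normal form and using the class-$2$ multiplication rule, a direct computation yields
\[
\phi(u_i)=u_i^{-1}\cdot z^{\,2c_i+a_ib_i},\qquad gu_i^{-1}g^{-1}=z^{\,\beta a_i-\alpha b_i}\cdot u_i^{-1}\quad(g=x^{\alpha}y^{\beta}),
\]
so the required condition becomes the $\mathbb{Z}/p^r$-linear system
\[
\alpha b_i-\beta a_i\equiv 2c_i+a_ib_i\pmod{p^r},\qquad i=1,2,
\]
whose coefficient matrix has determinant $a_1b_2-a_2b_1$. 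Since $(u_1,u_2)$ projects to a basis of $G/\Phi(G)\cong\mathbb{F}_p^2$, this determinant is a unit modulo $p$, hence modulo $p^r$, so the system is solvable. Thus the single automorphism $\phi$ witnesses strong reality of \emph{every} Beauville structure of $G$.

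It remains to produce an honest Beauville structure. Take $(x_1,y_1):=(x,y)$, whose three projective directions in $\mathbb{P}^1(\mathbb{F}_p)$ are $[1:0]$, $[0:1]$, $[1:1]$. Because $p\geq 5$ we have $|\mathbb{P}^1(\mathbb{F}_p)|=p+1\geq 6$, so there is room to choose a second generating pair $(x_2,y_2)$, for instance $(xy^s,xy^t)$ for suitably chosen residues $s,t\pmod p$, whose three projective directions miss those of the first pair. Conjugation and raising to a power do not alter projective directions in $G/\Phi(G)$, so a non-trivial common element of $\Sigma(x_1,y_1)\cap\Sigma(x_2,y_2)$ would force two non-zero images on different lines to coincide, which is impossible. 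The same dichotomy, applied to the successive quotients of the Frattini series $G\supsetneq\Phi(G)\supsetneq\Phi(\Phi(G))\supsetneq\cdots$ via the class-$2$ power identity $u^i=x^{ia}y^{ib}z^{ic-\binom{i}{2}ab}$, shows by induction that no deep coincidence occurs either.

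The first two paragraphs are essentially linear-algebraic once $\phi$ is recognised as an automorphism. The main obstacle is the last step: choosing $(x_2,y_2)$ explicitly and verifying that the disjoint-lines dichotomy genuinely propagates through the Frattini filtration so as to prevent any deep coincidence between the two $\Sigma$-sets. This is exactly where the hypothesis $p\geq 5$ enters, since it provides enough projective directions over $\mathbb{F}_p$ to separate the two triples at the top of the filtration, and the separation is then inherited at every lower level.
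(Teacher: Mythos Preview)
Your approach is essentially that of the paper: the same inversion automorphism $\phi$ and the same $2\times 2$ linear system over $\mathbb{Z}/p^r$, solvable precisely because a generating pair projects to a basis of $G/\Phi(G)$. The paper additionally invokes transitivity of $\mathrm{Aut}(G)$ on generating pairs to normalise $(x_1,y_1)=(x,y)$ before running the computation for $(x_2,y_2)$; your uniform treatment of an arbitrary generating pair avoids that transitivity claim and is arguably cleaner.

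Your self-identified ``main obstacle'' is not one, and the Frattini-filtration induction is unnecessary. If $u=x^ay^bz^c$ and $v=x^{a'}y^{b'}z^{c'}$ lie outside $\Phi(G)$ with $[a:b]\neq[a':b']$ in $\mathbb{P}^1(\mathbb{F}_p)$, then any coincidence $(u^i)^g=(v^j)^h$ forces $ia\equiv ja'$ and $ib\equiv jb'\pmod{p^n}$, since conjugation alters only the $z$-exponent. Eliminating $j$ gives $i(ab'-a'b)\equiv 0\pmod{p^n}$, and as $ab'-a'b$ is a unit this yields $p^n\mid i$; the class-$2$ power formula together with $r\leq n$ and $p$ odd then gives $u^i=e$ outright. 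So six distinct projective directions, available once $p\geq 5$, already guarantee $\Sigma(x_1,y_1)\cap\Sigma(x_2,y_2)=\{e\}$ at full depth. This is precisely the content of the paper's (tersely stated) conditions $i_1\neq j_1$, $i_2\neq j_2$, $i_1+i_2\neq j_1+j_2$, read modulo $p$.
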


\begin{proof}
Let $\{(x_1,y_1),(x_2,y_2)\}$ be a Beauville structure of $G$. We first note that Aut($G$) acts transitively on the non-central elements of $G$ of a given order and a generating pairs must consist of two element of order $p^n$, so without loss we may assume that $x_1=x$. Similarly $C_{\mbox{Aut}(G)}(x)$ acts transitively the elements $x$ can generate with so without loss we may also assume that $y_1=y$ and that $\phi\in$Aut($G$) acts by $x^\phi=x^{-1}$ and $y^{\phi}=y^{-1}$ (so we can take the element $g_1$ of Definition \ref{SRDef} to be trivial).

Observe the following. For any element $g\in G\setminus\Phi(G)$ we have that its conjugates are $g^G=\{gz^i\,|\,i=0,\ldots,p^r-1\}$. Every element of this group can be written in the form $x^iy^jz^k$ for some $0\leq i,j\leq p^n-1$ and $0\leq k\leq p^r-1$, so we have that $x_2=x^{i_1}y^{j_1}z^{k_1}$ and $y_2=x^{i_2}y^{j_2}z^{k_2}$ for some $1\leq i_1,i_2,j_1,j_2\leq p^n-1$ and $0\leq k_1,k_2\leq p^r-1$. Notice that if these are chosen so that $i_1\not=j_1$, $i_2\not= j_2$, $i_1+i_2\not=j_1+j_2$ and $o(x_2)=o(y_2)=p^n$, then these elements provide a Beauville structure and since $p\geq5$ finding such integers is straightforward. Moreover
\[
(x^iy^jz^k)^{\phi}=x^{-i}y^{-j}z^k\mbox{ and }(x^iy^jz^k)^{-1}=x^{-i}y^{-j}z^{-k-ij}
\]
We also have that for any $0\leq a,b\leq p^n-1$
\[
(x^{-i}y^{-j}z^k)^{x^ay^b}=x^{-i}y^{-j}z^{k+bi-aj}.
\]
It follows that to have $(x_2^{\phi})^{x^ay^b}=x_2^{-1}$ and $(y_2^{\phi})^{x^ay^b}=y_2^{-1}$ we must have that
\[
bi_1-aj_1\equiv-2k_1-ij\mbox{ (mod }p^r)\mbox{ and }bi_2-aj_2\equiv-2k_2-ij\mbox{ (mod }p^r).
\]
If the values of $i_1, i_2, j_1, j_2, k_1$ and $k_2$ are chosen so $x_2$ and $y_2$ generate the group and the conjugacy condition is satisfied, then values of $a$ and $b$ satisfying these equations can be found if $j_2i_1\not\equiv j_1i_2$ (mod $p$). We claim that if $j_2i_1\equiv j_1i_2$ (mod $p$), then $x_2$ and $y_2$ do not generate the group. Note that under this condition we have that
\[
(x^{i_1}y^{j_1})^{i_2}=x^{i_1i_2}y^{j_1i_2}z^k=x^{i_1i_2}y^{j_2i_1}z^k
\]
for some $k$ but we also have that
\[
(x^{i_2}y^{j_2})^{i_1}=x^{i_1i_2}y^{j_2i_1}z^{k'}
\]
for some $k'$ in other words $y_2\in\langle x_2,z\rangle$ which is a proper subgroup.
\end{proof}

We remark that strongly real Beauville $p$-groups have proved somewhat difficult to construct the only previously known examples being given in \cite{F3,F4,Gul1,Gul2}. The above provides infinitely many further new examples and does so for each prime $p\geq5$.

Clearly Proposition \ref{PruityL2(q)}, Lemma \ref{AbelianProof} and Proposition \ref{ExtraSpecProp} can be combined to produce infinitely many other examples like L$_2(8)\times C_5^2$ but it is not clear what other examples can arise.

\begin{problem}
Find other examples of purely strongly real Beauville groups.
\end{problem}

In particular, we have the following question.

\begin{question}
Do there exist purely strongly real Beauville 2-groups and 3-groups.
\end{question}

In the opinion of the author it seems likely that 2-generated 2-groups are more likely to be purely strongly real Beauville groups: there is a general philosophy in the study of $p$-groups that `the automorphism group of a $p$-group is typically a $p$-group' thanks to the results of Helleloid and Martin \cite{HM}. In particular, if $p$ is odd, then typically no automorphism like the $\phi$ in of Definition \ref{SRDef} exists since such an automorphism must necessarily have even order. On the other side of the coin however, most of their results give examples with large numbers of generators and examples that are 2-generated seem difficult to construct.

\section{Purely non-strongly Beauville groups}\label{NonSec}

What about purely non-strongly real Beauville groups? In \cite[Lemma 2.2]{F1} the author shows, in the terminology defined here, that the Mathieu groups M$_{11}$ and M$_{23}$ are purely non-strongly real Beauville groups, indeed the real content of \cite[Conjecture 1]{F2} is that among the non-abelian finite simple groups these are really the only ones. For an infinite supply of examples we have the following.

\begin{prop}\label{nonSR}
If $G$ and $H$ are Beauville groups of coprime order, such that $G$ is a purely non-strongly real Beauville group, then $G\times H$ is a purely non-strongly real Beauville group.
\end{prop}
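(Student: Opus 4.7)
My plan is to argue the contrapositive: any strongly real Beauville structure of $G\times H$ would project to a strongly real Beauville structure of $G$, contradicting the hypothesis on $G$. First note that $G\times H$ is at least a Beauville group, since combining a Beauville structure of $G$ with one of $H$ componentwise yields a Beauville structure of the product — the $\Sigma$-intersection condition survives thanks to the coprime orders. So let $\mathcal{B}=\{(X_1,Y_1),(X_2,Y_2)\}$ be any Beauville structure of $G\times H$ and write $X_i=(x_i,x_i')$, $Y_i=(y_i,y_i')$ for the components in $G$ and $H$.

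The first step is to show $\{(x_1,y_1),(x_2,y_2)\}$ is itself a Beauville structure of $G$. Generation, $\langle x_i,y_i\rangle=G$, follows from the standard fact that any subgroup of a direct product of coprime-order groups which projects onto both factors must be the whole product. For the $\Sigma$-condition, the crucial observation is that whenever $a\in G$ and $a'\in H$, the coprimality of $|a|$ and $|a'|$ together with the Chinese Remainder Theorem gives $\langle(a,a')\rangle=\langle a\rangle\times\langle a'\rangle$. Hence every power $a^k$ arises as the $G$-component of a power of $(a,a')$ — explicitly, take $m$ with $m\equiv k\pmod{|a|}$ and $m\equiv 0\pmod{|a'|}$. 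Consequently any $z\in\Sigma(x_1,y_1)\cap\Sigma(x_2,y_2)$ lifts to $(z,e)\in\Sigma(X_1,Y_1)\cap\Sigma(X_2,Y_2)=\{e\}$, forcing $z=e$.

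Now suppose $\mathcal{B}$ is strongly real, witnessed by $\Phi\in\mbox{Aut}(G\times H)$ and conjugators $(g_i,h_i)\in G\times H$. Because $G$ and $H$ are the unique Hall subgroups of their respective orders in $G\times H$, both are characteristic, and $\Phi$ decomposes as $(\phi,\psi)$ with $\phi\in\mbox{Aut}(G)$ and $\psi\in\mbox{Aut}(H)$. Projecting the identities $(g_i,h_i)\Phi(X_i)(g_i,h_i)^{-1}=X_i^{-1}$ and the analogous one for $Y_i$ onto $G$ then produces $g_i\phi(x_i)g_i^{-1}=x_i^{-1}$ and $g_i\phi(y_i)g_i^{-1}=y_i^{-1}$, exhibiting the projected Beauville structure of $G$ as strongly real, in contradiction to the hypothesis. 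The only step requiring more than routine coprime-order bookkeeping is the descent of the $\Sigma$-intersection condition, and this is dispatched by the CRT observation above; the splitting of $\mbox{Aut}(G\times H)$ and the remainder of the argument are immediate from the characteristicity of $G$ and $H$ in the product.
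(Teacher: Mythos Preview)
Your argument is correct and follows the same line as the paper's: both show $G\times H$ is Beauville via the componentwise construction, invoke the splitting $\mbox{Aut}(G\times H)=\mbox{Aut}(G)\times\mbox{Aut}(H)$ coming from coprimality, and conclude that any strongly real witness would descend to one on $G$. Your version is in fact more careful than the paper's, which never explicitly checks that an \emph{arbitrary} Beauville structure of $G\times H$ projects to a Beauville structure of $G$; your CRT argument for the descent of the $\Sigma$-condition supplies precisely the step the paper leaves implicit. (One small quibble: the justification you cite for $\langle x_i,y_i\rangle=G$ is the fact needed in the \emph{other} direction --- for the projection it is simply that surjective homomorphisms send generating sets to generating sets.)
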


\begin{proof}
Let $\{(x_1,y_1),(x_2,y_2)\}$ be a Beauville structure for $G$ and let $\{(u_1,v_1),(u_2,v_2)\}$ be a Beauville structure for $H$. Since $|G|$ and $|H|$ are coprime it follows that
\[
\{((x_1,u_1),(y_1,v_1)),((x_2,u_2),(y_2,v_2))\}
\]
is a Beauville structure for $G\times H$. Since $|G|$ is coprime to $|H|$ it follows that $G$ and $H$ are not isomorphic and so Aut($G\times H$)=Aut($G)\times$Aut($H$). Since elements of Aut($G$) cannot be used to provide a strongly real Beauville structure, it follows that none of the Beauville structures of $G\times H$ are strongly real.
\end{proof}

\begin{cor}
There exist infinitely many purely non-strongly real Beauville groups.
\end{cor}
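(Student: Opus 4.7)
The plan is to combine Proposition \ref{nonSR} with the observation (noted at the start of this section and originally proved in \cite{F1}) that $M_{11}$ is a purely non-strongly real Beauville group. Taking $G := M_{11}$, whose order factors as $|M_{11}| = 7920 = 2^4 \cdot 3^2 \cdot 5 \cdot 11$, it suffices to exhibit an infinite family of Beauville groups $H$ whose orders are coprime to $|G|$.

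For such a family I would appeal to the classification of abelian Beauville groups mentioned just after Lemma \ref{AbelianProof}: by Bauer--Catanese--Grunewald, the group $(\mathbb{Z}/n\mathbb{Z})^2$ is a Beauville group whenever $\gcd(n,6)=1$. Specialising to primes $p$ with $p \geq 7$ and $p \neq 11$ (an infinite set by Euclid), the groups $H_p := (\mathbb{Z}/p\mathbb{Z})^2$ have order $p^2$ coprime to $|M_{11}|$. Proposition \ref{nonSR} then gives directly that each $M_{11} \times H_p$ is a purely non-strongly real Beauville group, and since these products have pairwise distinct orders $7920\,p^2$ they are pairwise non-isomorphic, producing the required infinite family.

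There is essentially no obstacle here: the corollary is a direct consequence of Proposition \ref{nonSR} together with the existence of at least one purely non-strongly real Beauville group and any infinite supply of Beauville groups whose orders avoid the primes $\{2,3,5,11\}$. One could equally well use $M_{23}$ in place of $M_{11}$, or replace the abelian family by some other infinite family of Beauville groups with appropriately restricted prime divisors; the underlying argument is identical. The only small technical point is to ensure the chosen abelian groups are genuinely Beauville, which is why restricting to primes $p \geq 7$ distinct from $11$ is a convenient choice.
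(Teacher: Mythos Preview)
Your proof is correct and follows essentially the same approach as the paper: take $G = M_{11}$ in Proposition \ref{nonSR} and exhibit infinitely many Beauville groups $H$ of order coprime to $|M_{11}|$. The paper cites various $p$-group constructions (including Proposition \ref{ExtraSpecProp}) for the family of $H$'s, while you use the abelian groups $(\mathbb{Z}/p\mathbb{Z})^2$ for primes $p\geq7$, $p\neq11$---a perfectly valid and arguably cleaner choice.
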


\begin{proof}
As noted above, $M_{11}$ is a purely non-strongly real Beauville group. In Proposition \ref{nonSR} we can therefore take $G$ to be M$_{11}$. Since $|\mbox{M}_{11}|=11\times5\times3^2\times2^4$ we can take $H$ to be any Beauville group has order coprime to 11, 5, 3 and 2. Infinitely many examples of such groups are constructed in \cite{BBF,F4,Gul1,Gul2,SV} as well as in Proposition \ref{ExtraSpecProp}.
\end{proof}

The next example shows that Proposition \ref{nonSR} is far from being the best possible.

\begin{example}
The group $M_{11}\times A_5$ is easily seen to be a Beauville group as the elements
\[
x_1:=(1,2,3,4,5,6,7,8,9,10,11)(12,13,14,15,16),
\]
\[
y_1:=(1,5,3,4,10,2,8,9,11,6,7)(12,14,15,13,16)
\]
along with
\[
x_2:=(1,2,9,10,6)(3,11,5,4,7)(12,13,14,15,16),
\]
\[
y_2:=(1,4,8,11,3)(2,9,7,5,6)(12,14,15,13,16)
\]
provide a Beauville structure of type ((55,55,55),(5,5,5)). The lack of automorphisms that make $M_{11}$ a purely non-strongly real Beauville group clearly also make $M_{11}\times A_5$ a purely non-strongly real Beauville group. Note that every prime dividing the order $A_5$ also divides the order of $M_{11}$ and that $A_5$ not even a Beauville group.
\end{example}

\begin{question}
What is the most general form of Proposition \ref{nonSR}?
\end{question}

The author is not aware of any nilpotent or even soluble examples. As previously mentioned `the automorphism group of a $p$-group is typically a $p$-group'. Among the groups of order $p^n$ for small $n$ few examples of 2-generated groups with an automorphism group of odd order exist and the few that do appear to not be Beauville groups, suggesting that such examples are actually quite difficult to construct.

\begin{question}
Do there exist any nilpotent or soluble purely non-strongly real Beauville groups?
\end{question}

This would be immediately answered by combining Proposition \ref{nonSR} with an answer to the following.

\begin{question}
Do there exist Beauville $p$-groups whose automorphism groups have odd order?
\end{question}

\end{document}